\numberwithin{equation}{subsection}
\newtheorem{lemma}{Lemma}[section]
\newtheorem{theorem}{Theorem}[section]
\newtheorem{Remark}{Remark}[section]
\begin{document}

\title[Berndt-Arakawa formula]
  {On a certain identity for the cotangent finite Dirichlet series
  and its application to the Berndt--Arakawa formula
   } 
%
%
%
\author{Masaaki Furusawa}
\address[Masaaki Furusawa]{
Department of Mathematics, Graduate School of Science,
              Osaka Metropolitan University,
         Sugimoto 3-3-138, Sumiyoshi-ku, Osaka 558-8585, Japan
}
\email{furusawa@omu.ac.jp}
\thanks{The research of the first author was supported in part by 
JSPS KAKENHI Grant Number
22K03235
and by Osaka Central  Advanced Mathematical
Institute (MEXT Promotion of Distinctive Joint Research Center Program JPMXP0723833165).}
%
%
%
\author{Tomo Narahara}
\address[Tomo Narahara]{
Department of Mathematics, Faculty of Science,
              Osaka City University,
         Sugimoto 3-3-138, Sumiyoshi-ku, Osaka 558-8585, Japan}
\email{a21sa016@st.osaka-cu.ac.jp}
\subjclass[2020]{Primary: 11M41; Secondary: 11F20}
\keywords{Cotangent zeta function, Berndt-Arakawa forumula}
%
%
%
\begin{abstract}
The cotangent zeta function is a very  interesting object,
which is related to partial zeta functions and 
Hecke $L$-functions of real quadratic fields.
Its special values at odd integers greater than $1$ are
explicitly evaluated by Berndt in the real quadratic unit case.
Later Arakawa generalized the formula
to the arbitrary real quadratic number case.

The purpose of this article is to give a novel
and surprisingly simple proof 
of the Berndt--Arakawa formula.
Namely we prove a certain asymptotic identity for two  finite cotangent
Dirichlet series, from which we deduce
the Berndt--Arakawa formula  immediately.
The authors believe that
the method employed here of evaluating  special values of
\emph{infinite} Dirichlet series  from an asymptotic identity for its 
\emph{finite} partial sum    is 
of interest in its own right.
\end{abstract}
%
%
%
\date{\today}
%
%
%
%
%
%
%
\maketitle
%
%
%
%
\section{Introduction}
For a real irrational algebraic number $\alpha$, the
Dirichlet series 
\begin{equation}\label{cot zeta}
\xi\left(s,\alpha\right)=
\sum_{n=1}^\infty\,\frac{\cot \pi n\alpha}{n^s}
\end{equation}
converges absolutely when $\mathrm{Re}\left(s\right)>1$,
thanks to the Thue--Siegel--Roth theorem in the Diophantine approximation theory, 
as shown in Arakawa~\cite[Lemma~2.1]{Arakawa2} by using \cite[Lemma~1]{Arakawa1}.
We call $\xi\left(s,\alpha\right)$ the cotangent zeta function.

Suppose that   $\alpha$ is a real quadratic number.
Then
$\xi\left(s,\alpha\right)$ is of substantial interest because of its 
relationship with partial zeta functions and Hecke $L$-functions of real quadratic fields,
which we refer to Arakawa~\cite{Arakawa2}
and Arakawa, Ibukiyama and Kaneko~\cite[Chapter 13]{AIK}.
Berndt~\cite[Theorem~5.2]{Berndt} proved a fascinating  explicit
formula for $\xi\left(s,\alpha\right)$
when $s$ is an odd integer greater than $1$ and
$\alpha$ is a unit.
The formula was  later generalized  to an arbitrary real quadratic number $\alpha$
by
Arakawa~\cite[(2.7)]{Arakawa2}
(see also \cite[Theorem~13.4]{AIK}).

The purpose of this article is to give a novel proof of the Berndt--Arakawa
formula.
Indeed we deduce it  from a certain asymptotic identity for the finite cotangent Dirichlet series.
All existing proofs rely on meromorphic continuations in some way or  other.
On the contrary, our proof is surprisingly simple, not appealing to meromorphic continuations at all.
To the best of the authors' knowledge,
this is the first example of evaluating special values of \emph{infinite} Dirichlet series
from an asymptotic identity for \emph{finite} Dirichlet series.
We think that our method  is of  interest in its own right.

The authors believe that our approach is applicable to evaluating 
explicitly
special values of some other Dirichlet series by 
replacing $S_m\left(k\right)$ in
\eqref{the series}, employed in this paper, by  appropriate polynomials.
%
\subsection{Notation}
$\mathbb N$ denotes
the set of positive integers, i.e.
$\mathbb N=\left\{n\in\mathbb Z\mid n\ge 1\right\}$.
For $x\in\mathbb R$, we denote by $\lfloor x\rfloor$ the greatest integer
less than or equal to  $x$ and by $\left\{x\right\}$ the fractional part of $x$,
i.e. $\left\{x\right\}=x-\lfloor x\rfloor$.

For $x\in\mathbb R$, let $e\left(x\right):=\exp\left(2\pi ix\right)$.
Note that $e\left(x\right)=e\left(\left\{x\right\}\right)$.

We denote by $B_n\left(x\right)$ the Bernoulli polynomial defined by
\begin{equation}\label{Bernoulli}
\frac{te^{tx}}{e^t-1}=\sum_{n=0}^\infty
\frac{B_n\left(x\right)}{n !}\,t^n.
\end{equation}
We note that $B_0\left(x\right)=1$ in particular.

%
\subsection{Main Theorem}

Now let us state our main theorem.
%
\begin{theorem}\label{main theorem}
Let $\alpha$ be an  irrational real number.
For $k,m\in\mathbb N$, 
let
\begin{equation}\label{def of finite cotangent}
\xi_k\left(2m-1,\alpha\right):=
\sum_{n=1}^k\, \frac{\cot\pi n\alpha}{n^{2m-1}}.
\end{equation}

For $V=\begin{pmatrix}a&b\\c&d\end{pmatrix}\in
\mathrm{SL}_2\left(\mathbb Z\right)$,
we define $\displaystyle{V\alpha:=\frac{a\alpha+b}{c\alpha +d}}$
and suppose that $\eta:=c\alpha+d>0$ with $c>0$.

Then for $m\ge 2$, we have
\begin{multline}\label{main identity2}
\xi_K\left(2m-1,\alpha\right)-
\eta^{2m-2}\,\xi_k\left(2m-1,V\alpha\right)+
\frac{\eta^{2m-2}}{\pi k^{2m-1}}\cdot
\frac{1}{1-\left\{\frac{K-a k}{c}\right\}-c^{-1}
\left\{\frac{k}{\eta}\right\}}
\\
=
\left(-1\right)^{m-1}\left(2\pi\right)^{2m-1}
\sum_{\ell=0}^{2m}\,
\sum_{j\,\mathrm{mod}\,c}
\frac{B_\ell\left(x_j\right)B_{2m-\ell}\left(y_j\right)}{
\ell !\left(2m-\ell\right)!}\cdot
\eta^{2m-\ell-1}+O\left(k^{-1}\right).
\end{multline}
Here 
\[
K=\lfloor \frac{k}{\eta}\rfloor
\quad\text{and}\quad
x_j=1-\left\{\frac{dj}{c}\right\},
y_j=\left\{\frac{j}{c}\right\}
\quad\text{for each $j\,\mathrm{mod}\,c$}.
\]
\end{theorem}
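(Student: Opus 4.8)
The plan is to prove \eqref{main identity2} by purely elementary manipulation of the two finite sums, never passing to the full Dirichlet series or to any continuation, and to read off the Bernoulli term from the Fourier expansion of the periodized Bernoulli polynomials. The first and decisive observation is algebraic: since $ad-bc=1$ and $\eta=c\alpha+d$, one has
\[
V\alpha=\frac{a}{c}-\frac{1}{c\eta},\qquad \alpha=\frac{\eta-d}{c}.
\]
Feeding these into \eqref{def of finite cotangent} and using that $\cot$ has period $\pi$, I would rewrite the two summands as
\[
\cot\pi nV\alpha=\cot\Bigl(\tfrac{\pi}{c}\left(na\bmod c\right)-\tfrac{\pi n}{c\eta}\Bigr),\qquad
\cot\pi n\alpha=\cot\Bigl(\tfrac{\pi n\eta}{c}-\tfrac{\pi}{c}\left(nd\bmod c\right)\Bigr).
\]
Already this predicts the shape of the right-hand side of \eqref{main identity2}: reduction modulo $c$ is the source of the outer sum $\sum_{j\bmod c}$, and the residues $na\bmod c$ and $nd\bmod c$ are what ultimately produce the shifted arguments $x_j=1-\{dj/c\}$ and $y_j=\{j/c\}$.

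Next I would compare the two finite sums. The natural device is to group the $V\alpha$-sum according to $p=\lfloor n/\eta\rfloor$, so that $n$ runs through a window of length $\approx\eta$ for each $p\le K=\lfloor k/\eta\rfloor$; summing the weight $n^{-(2m-1)}$ over such a window contributes $\approx\eta\cdot(p\eta)^{-(2m-1)}$, which after multiplication by $\eta^{2m-2}$ collapses to $p^{-(2m-1)}$ and thus lines up with the $p$-th term of $\xi_K(2m-1,\alpha)$. Carrying this out via summation by parts is intended to reproduce $\xi_K(2m-1,\alpha)$ term by term up to $n\le K$. The one place where the matching fails is the top of the range: at $n=k$ the argument of $\cot$ lies close to an integer, and using $\cot\pi x\sim(\pi x)^{-1}$ there yields exactly the singular boundary contribution, i.e. the explicit correction term with denominator $1-\{(K-ak)/c\}-c^{-1}\{k/\eta\}$. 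This term is displayed separately precisely because, being Diophantine-sensitive, it cannot in general be absorbed into $O(k^{-1})$.

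It then remains to identify the finite leftover sum over the residues modulo $c$ with the Bernoulli double sum. For this I would substitute the Fourier expansion $\overline B_\ell(x)/\ell!=-(2\pi i)^{-\ell}\sum_{r\ne0}e(rx)\,r^{-\ell}$ of the periodized Bernoulli polynomial from \eqref{Bernoulli} into each of the two cotangent factors, carry out the summation over $j\bmod c$, and recognize $\sum_{\ell=0}^{2m}B_\ell(x_j)B_{2m-\ell}(y_j)\,\eta^{2m-\ell-1}/\bigl(\ell!\,(2m-\ell)!\bigr)$ as the coefficient picked out from the product of the two generating functions, with the overall constant $(-1)^{m-1}(2\pi)^{2m-1}$ being the bookkeeping of the powers of $2\pi i$. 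Throughout, both the convergence questions and the error estimates rest on the Thue--Siegel--Roth bound $\|n\alpha\|\gg_\varepsilon n^{-1-\varepsilon}$, which gives $|\cot\pi n\alpha|\ll_\varepsilon n^{1+\varepsilon}$ and makes every tail and approximation error $O(k^{-1})$ once $m\ge2$.

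I expect the principal obstacle to be the bookkeeping in the comparison step: executing the window-by-window (or summation-by-parts) matching while keeping exact control of the floor $K=\lfloor k/\eta\rfloor$, the residues modulo $c$, and the boundary behaviour, and then proving that everything not accounted for by the explicit correction term and the Bernoulli sum is genuinely $O(k^{-1})$, uniformly. The delicate point is to isolate the single Diophantine-sensitive boundary contribution from the harmless errors; it is exactly this separation that lets the whole argument run without any appeal to meromorphic continuation.
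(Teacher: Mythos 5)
Your proposal has a genuine gap at its central step. You propose to compare $\xi_K(2m-1,\alpha)$ and $\eta^{2m-2}\xi_k(2m-1,V\alpha)$ directly, by grouping the second sum into windows $\lfloor n/\eta\rfloor=p$ and matching each window against the $p$-th term of the first sum. But there is no local relation between $\cot\pi nV\alpha$ for $n$ in such a window and $\cot\pi p\alpha$: the individual values $\cot\pi nV\alpha$ oscillate wildly (and can be arbitrarily large when $nV\alpha$ is near an integer), and their window averages do not approximate $\eta^{-1}\cot\pi p\alpha$ in any sense that survives multiplication by the weight $n^{-(2m-1)}$. The cancellation between the two sums is a global, reciprocity-type phenomenon. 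The paper realizes it by introducing a single two-variable sum $S_m(k)=-\sum c\,f_m(u,v)$ over $0<|u|\le k$, $0<|v|\le K$ with $c\mid(u-dv)$, where $f_m(u,v)=(u^{2m-1}-\eta^{2m-1}v^{2m-1})/\bigl(u^{2m-1}v^{2m-1}(u-\eta v)\bigr)$, and evaluating it in two ways: a partial-fraction split of $f_m$ turns the inner $u$- (resp.\ $v$-) sums into truncations of the partial-fraction expansion of $\pi\cot$, producing $\xi_K(2m-1,\alpha)$, $\xi_k(2m-1,V\alpha)$ and the Diophantine boundary term; while the geometric expansion of $f_m$ plus the character sum over $j\bmod c$ factors $S_m(k)$ into products of one-variable exponential sums, each of which is a finite Fourier expansion of a Bernoulli polynomial up to $O(k^{-1})$. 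Your single-sum scheme has no mechanism to produce the product structure $B_\ell(x_j)B_{2m-\ell}(y_j)$ on the right-hand side: that bilinear shape is exactly the signature of a double sum factoring after the mod-$c$ character trick, and "substituting the Fourier expansion into each of the two cotangent factors" has no meaning when there is only one cotangent per term.

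A secondary but telling error: you invoke the Thue--Siegel--Roth bound to control errors in the proof of the theorem itself. The theorem is stated for an arbitrary irrational $\alpha$ (the paper's remark emphasizes that algebraicity is not assumed, since only finite sums occur), so Roth's theorem is neither available nor needed here; it enters only afterwards, when one lets $k\to\infty$ in the corollary to kill the explicit correction term $\frac{\eta^{2m-2}}{\pi k^{2m-1}}\bigl(1-\{\frac{K-ak}{c}\}-c^{-1}\{\frac{k}{\eta}\}\bigr)^{-1}$. Misplacing the Diophantine input is a sign that the proposal has not isolated where the correction term actually comes from: in the paper it is the single boundary term $u=k$ of the inner sum of $T_m^{(2)}(k)$, extracted before the remaining terms are shown to be $O(k^{-1})$, not a near-integer asymptote of a cotangent at the top of the range.
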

\begin{Remark}
In this theorem, we are dealing with the finite series $\xi_k\left(2m-1,\alpha\right)$
and hence the convergence is not an issue.
Thus we do not impose the condition that $\alpha$ is algebraic.
\end{Remark}
%
%

The deduction of the Berndt--Arakawa formula
from \eqref{main identity2} is immediate.
%
\begin{theorem}[Berndt~\cite{Berndt}, Arakawa~\cite{Arakawa2}]
\label{ba formula}
Let $F$ be a real quadratic field and $\alpha\in F\setminus \mathbb Q$.
Take a totally positive unit $\eta\in F$ such that
$\displaystyle{
\begin{pmatrix}\eta\alpha\\ \eta\end{pmatrix}
=V\begin{pmatrix}\alpha\\ 1\end{pmatrix}
}$
for some $V=\begin{pmatrix}a&b\\c&d\end{pmatrix}
\in\mathrm{SL}_2\left(\mathbb Z
\right)$ with $c>0$.

Then for an integer $m\ge 2$, we have
\begin{equation}\label{e: ba formula}
\xi\left(2m-1,\alpha\right)=
\frac{\left(-1\right)^{m-1}\left(2\pi\right)^{2m-1}}{1-\eta^{2m-2}}
\sum_{\ell=0}^{2m}\,
\sum_{j\,\mathrm{mod}\,c}
\frac{B_\ell\left(x_j\right)B_{2m-\ell}\left(y_j\right)}{
\ell !\left(2m-\ell\right)!}\cdot
\eta^{2m-\ell-1}.
\end{equation}
\end{theorem}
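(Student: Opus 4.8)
The plan is to feed the quadratic data into the asymptotic identity \eqref{main identity2} and let $k\to\infty$. The observation that makes everything work is that here $V$ \emph{fixes} $\alpha$. Indeed, reading off the two coordinates of $\begin{pmatrix}\eta\alpha\\ \eta\end{pmatrix}=V\begin{pmatrix}\alpha\\ 1\end{pmatrix}$ gives $\eta\alpha=a\alpha+b$ and $\eta=c\alpha+d$, so $V\alpha=\dfrac{a\alpha+b}{c\alpha+d}=\dfrac{\eta\alpha}{\eta}=\alpha$. Hence $\xi_k\left(2m-1,V\alpha\right)=\xi_k\left(2m-1,\alpha\right)$, and the two finite cotangent sums in \eqref{main identity2} attach to one and the same argument. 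First I would check that Theorem~\ref{main theorem} applies: $\alpha\in F\setminus\mathbb{Q}$ is an irrational real number, $\eta=c\alpha+d>0$ since $\eta$ is totally positive, and $c>0$ by hypothesis. I would also note $\eta\neq 1$ (otherwise $c\alpha+d=1$ would force $c=0$), so that $\eta$, hence $1/\eta$, is a genuine quadratic irrational; this will be needed for the error analysis.

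Next I would pass to the limit $k\to\infty$ in \eqref{main identity2}. Since $\alpha$ is algebraic and $2m-1>1$, the series \eqref{cot zeta} converges, so $\xi_k\left(2m-1,\alpha\right)\to\xi\left(2m-1,\alpha\right)$; and as $\eta$ is a fixed positive constant, $K=\lfloor k/\eta\rfloor\to\infty$, whence $\xi_K\left(2m-1,\alpha\right)\to\xi\left(2m-1,\alpha\right)$ as well. The main term on the right of \eqref{main identity2} is independent of $k$ and the remaining error is $O\left(k^{-1}\right)$. Thus, granting that the third term on the left tends to $0$, the identity collapses in the limit to
\[
\left(1-\eta^{2m-2}\right)\xi\left(2m-1,\alpha\right)
=\left(-1\right)^{m-1}\left(2\pi\right)^{2m-1}
\sum_{\ell=0}^{2m}\sum_{j\,\mathrm{mod}\,c}
\frac{B_\ell\left(x_j\right)B_{2m-\ell}\left(y_j\right)}{\ell!\left(2m-\ell\right)!}\,\eta^{2m-\ell-1},
\]
and dividing through by $1-\eta^{2m-2}$ produces \eqref{e: ba formula} verbatim.

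The only delicate point---and the step I expect to be the main obstacle---is showing that the correction term $\dfrac{\eta^{2m-2}}{\pi k^{2m-1}}\bigl(1-\{(K-ak)/c\}-c^{-1}\{k/\eta\}\bigr)^{-1}$ really is negligible. The useful fact is that $K-ak\in\mathbb{Z}$, so $\{(K-ak)/c\}\in\{0,\tfrac1c,\dots,\tfrac{c-1}{c}\}$ and therefore $1-\{(K-ak)/c\}\ge\tfrac1c$; consequently the denominator is at least $\tfrac1c\bigl(1-\{k/\eta\}\bigr)$. I would then either restrict the limit to the infinitely many $k$ with $\{k/\eta\}\le\tfrac12$---which exist because $1/\eta$ is irrational, so $\{k/\eta\}$ is dense in $[0,1)$---along which the denominator exceeds $\tfrac{1}{2c}$ and the correction term is $O\bigl(k^{-(2m-1)}\bigr)\to0$; since $\xi_K$ and $\xi_k$ already converge along the full sequence, passing to this subsequence is harmless. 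Alternatively, keeping all $k$, one uses that the quadratic irrational $1/\eta$ is badly approximable, so $1-\{k/\eta\}=\lceil k/\eta\rceil-k/\eta\gg k^{-1}$, making the correction term $O\bigl(k^{-(2m-2)}\bigr)\to0$ for every large $k$ because $2m-2\ge2$. Either route disposes of the third term and completes the deduction.
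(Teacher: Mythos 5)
Your proposal is correct and follows essentially the same route as the paper: substitute the quadratic data into \eqref{main identity2}, use $V\alpha=\alpha$, bound the denominator of the correction term below by $c^{-1}\left(1-\left\{k/\eta\right\}\right)$, and let $k\to\infty$. The only divergence is in the last estimate: the paper invokes the Thue--Siegel--Roth theorem to get $1-\left\{k/\eta\right\}\gg k^{-1-\varepsilon}$, whereas you propose either passing to a subsequence with $\left\{k/\eta\right\}\le\tfrac12$ or using the badly-approximable property of the quadratic irrational $\eta$ --- all three suffice since $2m-2\ge 2$ --- and your explicit check that $\eta\ne 1$ (needed to divide by $1-\eta^{2m-2}$) is a point the paper leaves implicit.
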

%
\begin{proof}
For the existence of such $\eta$, we refer to 
Arakawa, Ibukiyama and Kaneko~\cite[page 216]{AIK}.

In \eqref{main identity2}, we note that
\[
1-\left\{\frac{K-a k}{c}\right\}-c^{-1}
\left\{\frac{k}{\eta}\right\}\ge 1-\left(1-c^{-1}\right)-c^{-1}
\left\{\frac{k}{\eta}\right\}=c^{-1}\left(1-\left\{\frac{k}{\eta}\right\}\right)
\ge c^{-1} \langle\langle\frac{k}{\eta}\rangle\rangle
\]
where $\langle\langle x\rangle\rangle$ denotes the distance to the nearest integer
for $x\in\mathbb R$.
By Thue--Siegel--Roth theorem on Diophantine approximation
in transcendental number theory,  for any $\varepsilon>0$,
there exists a constant $C\left(\varepsilon\right)>0$ such that
\[
\langle\langle\frac{k}{\eta}\rangle\rangle>C\left(\varepsilon\right) k^{-1-\varepsilon}
\]
for any $k\in\mathbb N$ (cf. \cite[page 213]{AIK}).
Hence
\[
\left|
\frac{\eta^{2m-2}}{\pi k^{2m-1}}\cdot
\frac{1}{1-\left\{\frac{K-a k}{c}\right\}-c^{-1}
\left\{\frac{k}{\eta}\right\}}
\right|
< \frac{\eta^{2m-2}}{\pi k^{2m-1}}
\cdot\frac{c k^{1+\varepsilon}}{C\left(\varepsilon\right)}=
\frac{\eta^{2m-2}c}{\pi C\left(\varepsilon\right)}\, k^{2-2m+\varepsilon}.
\]

Thus by letting $k\to\infty$ and noting that $V\alpha=\alpha$, 
we obtain \eqref{e: ba formula}
from \eqref{main identity2}.
\end{proof}
%
%
%
\begin{Remark}
We remark that by taking $V=\begin{pmatrix}0&-1\\1&0\end{pmatrix}$,
we also obtain the following  functional equation, which Lerch stated
in \cite{Lerch} without a proof, by the same argument from Theorem~\ref{main theorem}:

For an algebraic irrational real number $\alpha$ and an integer $m\ge 2$, 
we have
\begin{multline}\label{lerch}
\xi\left(2m-1,\alpha\right)-\alpha^{2m-2}\xi\left(2m-1,\alpha^{-1}\right)
\\=
\left(-1\right)^{m-1}\left(2\pi\right)^{2m-1}
\sum_{\ell=0}^{2m}
\frac{B_\ell B_{2m-\ell}}{\ell!\,\left(2m-\ell\right)!}\cdot \alpha^{2m-\ell-1}
\end{multline}
where $B_j$ denotes the $j$-th Bernoulli number.

We refer to Berndt~\cite[Section~6]{Berndt}
and the references therein
for the relation of   \eqref{lerch} to the first letter from Ramanujan to Hardy,
dated January 16, 1913, 
and to  Ramanujan's formula for $\zeta\left(2r-1\right)$, $r>1$.
\end{Remark}
%
%
%
%
\section{Proof of Theorem~\ref{main theorem}}
%
%
%
\subsection{Some preliminaries and lemmas}
Before proceeding to the proof of Theorem~\ref{main theorem},
we recall some preliminaries and note some lemmas here.
%
\subsubsection{Lemma on Bernoulli polynomials}
\begin{lemma}\label{lemma1}
For $n,q\in\mathbb N$ and $x\in\mathbb R$, 
we define $A_{n,q}\left(x\right)$ by
\[
A_{n,q}\left(x\right):=\sum_{0<\left|u\right|\le n}
e\left(ux\right)u^{-q}
\quad\text{where $e\left(x\right)=\exp\left(2\pi i x\right)$}.
\]

Then we have
\begin{equation}\label{e:1-2}
A_{n,q}\left(x\right)=
\begin{cases}0,&\text{when $q=1$ and $x\in\mathbb Z$};
\\
\displaystyle{-B_q\left(\left\{x\right\}\right)\frac{\left(2\pi i\right)^q}{q!}
+O\left(\frac{1}{n}\right)},&\text{otherwise}.
\end{cases}
\end{equation}
\begin{proof}
When $q=1$ and $x\in\mathbb Z$, 
$A_{n,1}\left(x\right)=\sum_{0<\left|u\right|\le n}
u^{-1}=0$.

Suppose otherwise. We recall   (e.g. \cite[Theorem~4.11]{AIK}) :
\begin{equation}\label{bernoulli polynomial}
B_q\left(\left\{x\right\}\right)=-
\frac{q!}{\left(2\pi i\right)^q}\,\lim_{m\to\infty}A_{m,q}\left(x\right).
\end{equation}
Hence
\begin{align*}
A_{n,q}\left(x\right)&=\lim_{m\to\infty}A_{m,q}\left(x\right)
-\sum_{u=n+1}^\infty \left\{
e\left(ux\right)+\left(-1\right)^qe\left(-ux\right)\right\}u^{-q}
\\
&=-B_q\left(\left\{x\right\}\right)\frac{\left(2\pi i\right)^q}{q!}
-\sum_{u=n+1}^\infty \left\{
e\left(ux\right)+\left(-1\right)^qe\left(-ux\right)\right\}u^{-q}.
\end{align*}
When $q\ge 2$, we have
\[
\left|
\,\sum_{u=n+1}^\infty \left\{
e\left(ux\right)+\left(-1\right)^qe\left(-ux\right)\right\}u^{-q}
\right|
\le 2\sum_{u=n+1}^\infty\,\frac{1}{u\left(u-1\right)}=\frac{2}{n}.
\]
Suppose that  $q=1$. Then 
\begin{equation}\label{remainder}
\sum_{u=n+1}^\infty
\left\{e\left(ux\right)-e\left(-ux\right)\right\}u^{-1}
=2i\sum_{u=n+1}^\infty \frac{\sin \left(2\pi ux\right)}{u}.
\end{equation}
Put $A_r=\sum_{u=1}^r\sin\left(2\pi ux\right)$ for $r\in\mathbb N$.
Then by the well-known summation formula, we have
\[
A_r=\frac{\sin\left(\pi rx\right)\cdot
\sin \left\{\pi \left(r+1\right)x\right\}}{ \sin \pi x}
\]
since $\sin \left(\pi x\right)\ne 0$ from the assumption.
Hence $A_r=O\left(1\right)$.
By the Abel transformation, we have
\[
\sum_{u=n+1}^r\frac{\sin\left(2\pi ux\right)}{u}
=\frac{A_r}{r}-\frac{A_n}{n+1}+
\sum_{u=n+1}^{r-1}A_u\left(\frac{1}{u}-\frac{1}{u+1}\right)
\]
and hence
\[
\sum_{u=n+1}^\infty \frac{\sin \left(2\pi ux\right)}{u}
=O\left(\frac{1}{n}\right)+O\left(\sum_{u=n+1}^\infty \left(
\frac{1}{u}-\frac{1}{u+1}\right)\right)=O\left(\frac{1}{n}\right).
\]
\end{proof}
\end{lemma}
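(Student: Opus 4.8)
The plan is to treat the two cases of \eqref{e:1-2} separately, extracting the main term from the known Fourier expansion of the Bernoulli polynomial and reducing everything else to a tail estimate of size $O(1/n)$. The degenerate case $q=1$ with $x\in\mathbb{Z}$ is immediate: then $ux\in\mathbb{Z}$, so $e(ux)=1$ for every $u$, and the summand $u^{-1}$ is odd in $u$; summing over the symmetric range $0<|u|\le n$ produces exact cancellation, whence $A_{n,1}(x)=0$.

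For the remaining ``otherwise'' case I would take as input the classical identity $B_q(\{x\})=-\frac{q!}{(2\pi i)^q}\lim_{m\to\infty}A_{m,q}(x)$ (see \cite[Theorem~4.11]{AIK}), which already identifies $\lim_{m\to\infty}A_{m,q}(x)$ with the claimed main term $-B_q(\{x\})\,(2\pi i)^q/q!$. It then remains only to control the difference $A_{n,q}(x)-\lim_{m\to\infty}A_{m,q}(x)$. Pairing the indices $u$ and $-u$ writes this difference as $-\sum_{u=n+1}^{\infty}\big(e(ux)+(-1)^q e(-ux)\big)u^{-q}$, so the entire problem reduces to showing that this tail is $O(1/n)$.

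Here the two subcases behave very differently, and the latter is the crux. When $q\ge 2$ the tail converges absolutely: each term is bounded by $2u^{-q}\le 2u^{-2}\le 2/\big(u(u-1)\big)$, and the resulting series telescopes to the explicit bound $2/n$. The hard part is $q=1$ (necessarily with $x\notin\mathbb{Z}$), where the tail equals $2i\sum_{u=n+1}^{\infty}\sin(2\pi ux)/u$ and is only conditionally convergent, so no term-by-term majorization can work. My plan is to handle it by Abel summation: writing $S_r:=\sum_{u=1}^{r}\sin(2\pi ux)$, the standard closed form $S_r=\sin(\pi rx)\,\sin\!\big(\pi(r+1)x\big)/\sin(\pi x)$ shows $S_r=O(1)$ uniformly in $r$, and it is precisely here that the hypothesis $x\notin\mathbb{Z}$ (equivalently $\sin\pi x\ne 0$) enters. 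Summation by parts then converts the oscillatory sum into a boundary contribution of size $O(1/n)$ together with a telescoping sum $\sum_{u>n}\big(u^{-1}-(u+1)^{-1}\big)=O(1/n)$, which yields the required $O(1/n)$ bound and finishes the proof.
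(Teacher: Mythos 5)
Your proposal is correct and follows essentially the same route as the paper's own proof: the same symmetric cancellation for $q=1$, $x\in\mathbb{Z}$, the same appeal to the Fourier expansion $B_q(\{x\})=-\frac{q!}{(2\pi i)^q}\lim_{m\to\infty}A_{m,q}(x)$, the same telescoping bound $2/(u(u-1))$ for $q\ge 2$, and the same Abel summation with the closed form for $\sum_{u=1}^r\sin(2\pi ux)$ in the conditionally convergent case $q=1$, $x\notin\mathbb{Z}$. Nothing is missing, and you correctly identify where the hypothesis $\sin\pi x\ne 0$ is used.
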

%
%
%
%
\subsection{Proof of Theorem~\ref{main theorem}}
Now let us prove our main theorem,
Theorem~\ref{main theorem}.

First, for $k,m\in \mathbb N$, 
we define a sum $S_{m}\left(k\right)$ by
\begin{equation}\label{the series}
S_{m}\left(k\right):=-
\sum_{\substack{ 0<\left|u\right|\le k,\,
0<\left|v\right|\le K
\\
c\lvert\left(u-dv\right)
}}\,
c \cdot f_m\left(u,v\right)
\end{equation}
where
\begin{equation}\label{e:1-0}
f_m\left(u,v\right)=\frac{u^{2m-1}-\eta^{2m-1}v^{2m-1}}{
u^{2m-1}v^{2m-1}\left(u-\eta v\right)}=
\sum_{\ell=1}^{2m-1}\eta^{2m-1-\ell}u^{\ell-2m}v^{-\ell}.
\end{equation}

Then  Theorem~\ref{main theorem} is proved 
by deforming $S_{m}\left(k\right)$
in two different ways.
%
%
%
\subsubsection{The first deformation}
For $u,v\in\mathbb Z$,
we recall that 
\begin{equation}\label{character}
\sum_{j\,\mathrm{mod}\,c}
e\left(\frac{u-dv}{c}\, j\right)
=\begin{cases}c,&\text{when $c\lvert\left(u-dv\right)$};
\\
0,&\text{otherwise}.
\end{cases}
\end{equation}

Hence we may write \eqref{the series} as
\begin{equation}\label{e:1-1}
S_{m}\left(k\right)=-
\sum_{\ell=1}^{2m-1}\eta^{2m-1-\ell}
\sum_{j\,\mathrm{mod}\,c}
A_{k,2m-\ell}\left(\frac{j}{c}\right)\,
A_{K,\ell}\left(\frac{-dj}{c}\right).
\end{equation}

When $m\ge 2$, by Lemma~\ref{lemma1} and \eqref{e:1-1}, we have
\begin{equation}\label{e: first m=2}
S_m\left(k\right)=
\left(-1\right)^{m-1}\left(2\pi\right)^{2m}
\sum_{\ell=1}^{2m-1}
\sum_{j\,\mathrm{mod}\,c}
\frac{B_\ell\left(x_j\right)B_{2m-\ell}\left(y_j\right)}{\ell !\left(2m-\ell\right)!}
\eta^{2m-1-\ell}+O\left(k^{-1}\right).
\end{equation}
%
%
%
\subsubsection{The second deformation}
%
%
%
There is another way to deform $S_m\left(k\right)$.

Since
\[
f_m\left(u,v\right)=\frac{-1}{v^{2m-1}\left(\eta v-u\right)}
+\frac{\eta^{2m-2}}{u^{2m-1}\left(v-\eta^{-1}u\right)},
\]
we have  $S_m\left(k\right)=S_m^{(1)}\left(k\right)+
S_m^{(2)}\left(k\right)$, where
\[
S_m^{(1)}\left(k\right)=\sum_{\substack{ 0<\left|u\right|\le k\\
0<\left|v\right|\le K
\\
c\lvert\left(u-dv\right)
}}
\frac{c}{v^{2m-1}\left(\eta v-u\right)}
\quad\text{and}
\quad
S_m^{(2)}\left(k\right)=\sum_{\substack{ 0<\left|u\right|\le k
\\
0<\left|v\right|\le K
\\
c\lvert\left(u-dv\right)
}}
\frac{-c\eta^{2m-2}}{u^{2m-1}\left(v-\eta^{-1}u\right)}.
\]
Then we have
\begin{align*}
&S_m^{(1)}\left(k\right)=\sum_{0<\left|v\right|\le K}
\frac{1}{v^{2m-1}}
\sum_{\substack{\left|u\right|\le k\\ c\lvert \left(u-dv\right)}}
\frac{1}{\frac{\eta v-u}{c}}
-\frac{c}{\eta}\sum_{\substack{0<\left|v\right|\le K\\ c\lvert dv}}
\frac{1}{v^{2m}}
\\
=&\sum_{0<\left|v\right|\le K}
\frac{1}{v^{2m-1}}
\sum_{\substack{\left|u\right|\le k\\ c\lvert \left(u-dv\right)}}
\frac{1}{\alpha v-\frac{u-dv}{c}}
-\frac{1}{\eta}\sum_{0<\left|v\right|\le K}
\frac{1}{v^{2m}}
\sum_{j\,\mathrm{mod}\,c} e\left(\frac{-dv}{c}j\right)
\\
=&\sum_{0<\left|v\right|\le K}
\frac{1}{v^{2m-1}}
\sum_{-\frac{k+dv}{c}\le w\le \frac{k-dv}{c}}
\frac{1}{\alpha v-w}
-\frac{1}{\eta}
\sum_{j\,\mathrm{mod}\,c}
\sum_{0<\left|v\right|\le K}
e\left(\frac{-dj}{c}v\right)
v^{-2m}.
\end{align*}
Hence
\begin{equation}\label{s_m^1}
S_m^{(1)}\left(k\right)=2\sum_{v=1}^{K}
\frac{1}{v^{2m-1}}
\sum_{-\frac{k+dv}{c}\le w\le \frac{k-dv}{c}}
\frac{1}{\alpha v-w}
-\frac{1}{\eta}\sum_{j\,\mathrm{mod}\,c} A_{K,2m}\left(
\frac{-dj}{c}\right).
\end{equation}

Similarly we have
\[
S_m^{(2)}\left(k\right)=
\sum_{0<\left|u\right|\le k}
\frac{-\eta^{2m-2}}{u^{2m-1}}
\sum_{\substack{\left|v\right|\le K\\ c\lvert\left(u-dv\right)}}
\frac{1}{c^{-1}\left(v-\eta^{-1}u\right)}
-\eta^{2m-1}
\sum_{j\,\mathrm{mod}\,c}A_{k,2m}\left(\frac{j}{c}\right).
\]
Since $u-dv=\left(ad-bc\right)u-dv=-bcu-d\left(v-au\right)$
and $c$, $d$ are relatively prime, we have
\[
c\lvert\left(u-dv\right)\Longleftrightarrow
c\lvert \left(v-au\right).
\]
When we put $w=c^{-1}\left(au-v\right)$, we have
\[
c^{-1}\left(v-\eta^{-1}u\right)=
c^{-1}\left(-cw+au-\eta^{-1}u\right)=
\frac{a\alpha+b}{c\alpha+d}\,u-w=V\alpha u-w.
\]
Hence
\begin{equation}\label{s_m^2}
S_m^{(2)}\left(k\right)=
-2\eta^{2m-2}\sum_{u=1}^{k}
\frac{1}{u^{2m-1}}
\sum_{-\frac{K-au}{c}\le w\le \frac{K+au}{c}}
\frac{1}{V\alpha u-w}
-\eta^{2m-1}
\sum_{j\,\mathrm{mod}\,c}A_{k,2m}\left(\frac{j}{c}\right).
\end{equation}
%
Thus we have
$S_m\left(k\right)=2T_m^{(1)}\left(k\right)-2\eta^{2m-2}T_m^{(2)}\left(k\right)
+U_m\left(k\right)$ where
\begin{align}
T_m^{(1)}\left(k\right)&=\sum_{v=1}^{K}
\frac{1}{v^{2m-1}}
\sum_{-\frac{k+dv}{c}\le w\le \frac{k-dv}{c}}
\frac{1}{\alpha v-w},
\label{t_m^1}
\\
T_m^{(2)}\left(k\right)&=\sum_{u=1}^{k}
\frac{1}{u^{2m-1}}
\sum_{-\frac{K-au}{c}\le w\le \frac{K+au}{c}}
\frac{1}{V\alpha u-w},
\label{t_m^2}
\\
U_m\left(k\right)&=-\frac{1}{\eta}\sum_{j\,\mathrm{mod}\,c} A_{K,2m}\left(
\frac{-dj}{c}\right)
-\eta^{2m-1}
\sum_{j\,\mathrm{mod}\,c}A_{k,2m}\left(\frac{j}{c}\right).
\label{u}
\end{align}
Here by Lemma~\ref{lemma1}, we have
\begin{equation}\label{u-0}
U_m\left(k\right)=
\left(-1\right)^m\left(2\pi\right)^{2m}
\sum_{\ell=0,\,2m}\,
\sum_{j\,\mathrm{mod}\,c}
\frac{B_\ell\left(x_j\right)B_{2m-\ell}\left(y_j\right)}{\ell !\left(2m-\ell\right)!}
\eta^{2m-1-\ell}+O\left(k^{-1}\right).
\end{equation}
%
\subsubsection{Estimation of the inner sums of  the double sums $T_m^{(1)}\left(k\right)$ and $T_m^{(2)}\left(k\right)$.}
First let us consider the inner sum of $T_m^{(1)}\left(k\right)$.
For $1\le v\le K$, we have
\begin{align*}
&\sum_{-\frac{k+dv}{c}\le w\le \frac{k-dv}{c}}
\,\frac{1}{\alpha v-w}
=\lim_{N\to \infty}\left( \sum_{-N\le w\le N}-\sum_{-N\le w< -\frac{k+dv}{c}}-\sum_{\frac{k-dv}{c}< w\le N}\right)\,\frac{1}{\alpha v-w}
 \\
&=\lim_{N\to \infty}\left( \sum_{-N\le w\le N}\frac{1}{\alpha v-w} -\sum_{\frac{k+dv}{c}< w\le N}
\frac{1}{w+\alpha v} +\sum_{\frac{k-dv}{c}< w\le N}\frac{1}{w-\alpha v} \right) \\
&=\pi\cot{\pi \alpha v} 
\\
&\quad+\lim_{N\to \infty}\left(-\sum_{\frac{k+dv}{c}+\lfloor\alpha v\rfloor< w\le N+\lfloor\alpha v\rfloor}\frac{1}{w+\{ \alpha v\}} +\sum_{\frac{k-dv}{c}-\lfloor\alpha v\rfloor< w\le N-\lfloor\alpha v\rfloor}\frac{1}{w-\{ \alpha v\}} \right) .
\end{align*}
%
Here when $N$ is sufficiently large, 
depending on the sign of $\alpha$, we note that 
\begin{align*}
&-\sum_{\frac{k+dv}{c}+\lfloor\alpha v\rfloor< w\le N+\lfloor\alpha v\rfloor}\frac{1}{w+\{ \alpha v\}} +\sum_{\frac{k-dv}{c}-\lfloor\alpha v\rfloor< w\le N-\lfloor\alpha v\rfloor}\frac{1}{w-\{ \alpha v\}} 
\\
=&\sum_{\frac{k-dv}{c}-\lfloor\alpha v\rfloor< w\le \frac{k+dv}{c}+\lfloor\alpha v\rfloor}\frac{1}{w-\{ \alpha v\}}
\\
&+\sum_{\frac{k+dv}{c}+\lfloor\alpha v\rfloor< w\le N - \lfloor\alpha v\rfloor}\left( \frac{1}{w-\{ \alpha v\}} -\frac{1}{w+\{ \alpha v\}}\right)-\sum_{N - \lfloor\alpha v\rfloor< w\le N + \lfloor\alpha v\rfloor}\frac{1}{w + \{ \alpha v\}}
\end{align*}
when $\alpha>0$, and, 
\begin{align*}
&-\sum_{\frac{k+dv}{c}+\lfloor\alpha v\rfloor< w\le N+\lfloor\alpha v\rfloor}\frac{1}{w+\{ \alpha v\}} +\sum_{\frac{k-dv}{c}-\lfloor\alpha v\rfloor< w\le N-\lfloor\alpha v\rfloor}\frac{1}{w-\{ \alpha v\}} 
\\
=&\sum_{\frac{k-dv}{c}-\lfloor\alpha v\rfloor< w\le \frac{k+dv}{c}+\lfloor\alpha v\rfloor}\frac{1}{w-\{ \alpha v\}}
\\
&+\sum_{\frac{k+dv}{c}+\lfloor\alpha v\rfloor< w\le N+ \lfloor\alpha v\rfloor}\left( \frac{1}{w-\{ \alpha v\}} -\frac{1}{w+\{ \alpha v\}}\right)+\sum_{N + \lfloor\alpha v\rfloor< w\le N - \lfloor\alpha v\rfloor}\frac{1}{w - \{ \alpha v\}}
\end{align*}
when $\alpha<0$.
Thus we have
\begin{multline}\label{sum}
\sum_{-\frac{k+dv}{c}\le w\le \frac{k-dv}{c}}
\,\frac{1}{\alpha v-w}
=
\pi\cot{\pi \alpha v} 
\\
+\sum_{\frac{k-dv}{c}-\lfloor\alpha v\rfloor< w\le \frac{k+dv}{c}+\lfloor\alpha v\rfloor}\frac{1}{w-\{ \alpha v\}}+\sum_{\frac{k+dv}{c}+\lfloor\alpha v\rfloor< w}\frac{2\{ \alpha v\}}{w^2-\{ \alpha v\}^2}.
\end{multline}
%

Let us estimate the second term of the right-hand side of \eqref{sum}.
Since
\[
\frac{k-dv}{c}-\lfloor\alpha v\rfloor-\{\alpha v\}=c^{-1}(k-\eta v)\ge c^{-1}(k-\eta K)=c^{-1}(k-\eta \lfloor \frac{k}{\eta}\rfloor)>0,
\]
each summand is positive and hence is bounded by
$\frac{1}{\frac{k-dv+1}{c}-\lfloor\alpha v\rfloor-\{\alpha v\}}=\frac{c}{k -\eta v +1}$.
Since the number of the terms in the sum is $O\left(v\right)$, the second term
of the right-hand side of \eqref{sum} is
$O\left(v\cdot\frac{c}{k -\eta v +1} \right)=
O\left(\frac{v}{k-\eta v+1}\right)$.

As for the third term of the right-hand side of \eqref{sum}, we have
\[
\sum_{\frac{k+dv}{c}+\lfloor\alpha v\rfloor< w}\frac{2\{ \alpha v\}}{w^2-\{ \alpha v\}^2}
\le\sum_{\frac{k}{c}-1< w}\frac{2}{w^2-1}
=O(k^{-1}).
\]

Since $k^{-1}=O\left(\frac{v}{k-\eta v+1}\right)$, we have
\begin{equation}\label{t_m^1-4}
\sum_{-\frac{k+dv}{c}\le w\le \frac{k-dv}{c}}\frac{1}{\alpha v-w}
=\pi\cot{\pi \alpha v} +O\left( \frac{v}{k -\eta v +1} \right).
\end{equation}
%
%
%

Now let us consider the inner sum of $T_m^{(2)}\left(k\right)$.
For $1\le u \le k$, we write
\[
\sum_{-\frac{K-au}{c}\le w\le \frac{K+au}{c}}\frac{1}{V\alpha u-w}
=\sum_{-\frac{K-au}{c}-1\le w\le \frac{K+au}{c}}\frac{1}{V\alpha u-w} -\frac{1}{V\alpha u+\lfloor\frac{K-au}{c}\rfloor+1}.
\]
By a computation similar to that above, we have
\begin{multline*}
\sum_{-\frac{K-au}{c}-1\le w\le \frac{K+au}{c}}\frac{1}{V\alpha u-w} =\pi\cot{\pi V\alpha u} 
\\
-\sum_{\frac{K-au}{c}+\lfloor V\alpha u\rfloor+1< w\le \frac{K+au}{c}-\lfloor V\alpha u\rfloor}\frac{1}{w+\{ V\alpha u\}}+\sum_{\frac{K+au}{c}-\lfloor V\alpha u\rfloor< w}\frac{2\{ V\alpha u\}}{w^2-\{ V\alpha u\}^2} 
\end{multline*}
and hence
\[
\sum_{-\frac{K-au}{c}-1\le w\le \frac{K+au}{c}}\frac{1}{V\alpha u-w} 
=\pi\cot{\pi V\alpha u} +O\left( \frac{u}{K-\frac{u}{\eta}+1+c} \right).
\]
Thus
\begin{equation}\label{t_m^2-3}
\sum_{-\frac{K-au}{c}\le w\le \frac{K+au}{c}}\frac{1}{V\alpha u-w}=
\pi\cot{\pi V\alpha u} -\frac{1}{V\alpha u+\lfloor\frac{K-au}{c}\rfloor+1}+O\left( \frac{u}{K-\frac{u}{\eta}+1+c} \right).
\end{equation}
%
\subsubsection{Estimation of $T_m^{(1)}\left(k\right)$ and $T_m^{(2)}\left(k\right)$}
From  \eqref{t_m^1}, \eqref{t_m^2}, \eqref{t_m^1-4} and \eqref{t_m^2-3}, we have
\begin{equation}\label{t_m^1-0}
T_m^{(1)}\left(k\right)=\pi\,\xi_K\left(2m-1,\alpha\right) +O\left( \sum_{v=1}^{K}\frac{1}{v^{2m-2}(k-\eta v+1)} \right)
\end{equation}
and
\begin{multline}\label{t_m^2-0}
T_m^{(2)}\left(k\right)=\pi\,\xi_k\left(2m-1,V\alpha\right)
-\sum_{u=1}^{k}\frac{1}{u^{2m-1}\left(V\alpha u+\lfloor\frac{K-au}{c}\rfloor+1\right)}
\\ +O\left( \sum_{u=1}^{k}\frac{1}{v^{2m-2}(K-\frac{u}{\eta}+1+c)} \right) .
\end{multline}
 As for the second term of the right-hand side of \eqref{t_m^1-0}, we have
\begin{align}\label{second term}
\begin{split}
&\sum_{v=1}^{K}\frac{1}{v^{2m-2}(k-\eta v+1)}
=\sum_{v=1}^{K}\frac{1}{v^{2m-2}\left(\eta K-\eta v+1+\eta\{\frac{k}{\eta}\}\right)} \\
&\le \frac{1}{K^{2m-2}\left(1+\eta\{\frac{k}{\eta}\}\right)}+\frac{1}{\eta}\sum_{v=1}^{K-1}\frac{1}{v^2(K-v)} \\
&=\frac{1}{K^{2m-2}\left(1+\eta\{\frac{k}{\eta}\}\right)}+\frac{1}{\eta}\left(\frac{1}{K}\sum_{v=1}^{K-1}\frac{1}{v^2}+\frac{1}{K^2}\sum_{v=1}^{K-1}\left(\frac{1}{v}+\frac{1}{K-v}\right)\right)
=O(k^{-1}).
\end{split}
\end{align}
Similarly, the third term of the right-hand side of \eqref{t_m^2-0} is $O(k^{-1})$. 

As for the second term of the right-hand side of \eqref{t_m^2-0}, noting that
\[
V\alpha u+\lfloor \frac{K-au}{c}\rfloor +1
=\frac{K-\frac{u}{\eta}}{c}+1-\left\{\frac{K-au}{c}\right\}=
\frac{k-u}{c\eta}
+1-\left\{\frac{K-au}{c}\right\}-
c^{-1}\left\{\frac{k}{\eta}\right\},
\]
we have
\begin{align*}
&\sum_{u=1}^{k}\frac{1}{u^{2m-1}\left(V\alpha u+\lfloor\frac{K-au}{c}\rfloor+1\right)} \\
&=\frac{1}{k^{2m-1}}\frac{1}{1-\left\{\frac{K-au}{c}\right\}-c^{-1}\left\{\frac{k}{\eta}\right\}}
+\sum_{u=1}^{k-1}\frac{1}{u^{2m-1}\left(\frac{k-u}{c\eta}+1-\left\{\frac{K-au}{c}\right\}-c^{-1}\left\{\frac{k}{\eta}\right\}\right)}.
\end{align*}
Then since
\[
\sum_{u=1}^{k-1}\frac{1}{u^{2m-1}\left(\frac{k-u}{c\eta}+1-\left\{\frac{K-au}{c}\right\}-c^{-1}\left\{\frac{k}{\eta}\right\}\right)} \le c\eta \sum_{u=1}^{k-1}\frac{1}{u^2(k-u)}=O(k^{-1})
\]
as seen in the argument for \eqref{second term}, we have
\[
\sum_{u=1}^k
\frac{1}{u^{2m-1}}\cdot\frac{1}{V\alpha u+\lfloor \frac{K-au}{c}\rfloor +1}=
\frac{1}{k^{2m-1}}\cdot
\frac{1}{1-\left\{\frac{K-ak}{c}\right\}-
c^{-1}\left\{\frac{k}{\eta}\right\}}+O\left(k^{-1}\right).
\]
%
Hence
\begin{align}
&T_m^{(1)}\left(k\right)=\pi\xi_K\left(2m-1,\alpha\right)+O\left(k^{-1}\right), \label{t_m^1-1} \\
&T_m^{(2)}\left(k\right)=\pi\xi_k\left(2m-1,V\alpha\right)-\frac{1}{k^{2m-1}}\cdot \frac{1}{1-\left\{\frac{K-ak}{c}\right\}-c^{-1}\left\{\frac{k}{\eta}\right\}}+O\left(k^{-1}\right). \label{t_m^2-1}
\end{align}

Thus from \eqref{t_m^1-1}, \eqref{t_m^2-1}
and \eqref{u-0}, we have
\begin{multline}\label{s_m(k)-2}
S_m\left(k\right)=
2\pi
\left\{\xi_K\left(2m-1,\alpha\right)-\eta^{2m-2}
\,\xi_k\left(2m-1,V\alpha\right)\right\}
\\
+\left(-1\right)^m\left(2\pi\right)^{2m}
\sum_{\ell=0,\,2m}\,
\sum_{j\,\mathrm{mod}\,c}
\frac{B_\ell\left(x_j\right)B_{2m-\ell}\left(y_j\right)}{\ell !\left(2m-\ell\right)!}
\eta^{2m-1-\ell}
\\
+\frac{2\eta^{2m-2}}{k^{2m-1}}\cdot
\frac{1}{1-\left\{\frac{K-ak}{c}\right\}-
c^{-1}\left\{\frac{k}{\eta}\right\}}
+O\left(k^{-1}\right).
\end{multline}
Hence by comparing \eqref{e: first m=2} with 
\eqref{s_m(k)-2}, we have
\begin{multline*}
\left(-1\right)^{m-1}\left(2\pi\right)^{2m}
\sum_{\ell=1}^{2m-1}
\sum_{j\,\mathrm{mod}\,c}
\frac{B_\ell\left(x_j\right)B_{2m-\ell}\left(y_j\right)}{\ell !\left(2m-\ell\right)!}
\eta^{2m-1-\ell}+O\left(k^{-1}\right)
\\
=
2\pi
\left\{\xi_K\left(2m-1,\alpha\right)-\eta^{2m-2}
\,\xi_k\left(2m-1,V\alpha\right)\right\}
\\
+\left(-1\right)^m\left(2\pi\right)^{2m}
\sum_{\ell=0,\,2m}\,
\sum_{j\,\mathrm{mod}\,c}
\frac{B_\ell\left(x_j\right)B_{2m-\ell}\left(y_j\right)}{\ell !\left(2m-\ell\right)!}
\eta^{2m-1-\ell}
\\
+\frac{2\eta^{2m-2}}{k^{2m-1}}\cdot
\frac{1}{1-\left\{\frac{K-ak}{c}\right\}-
c^{-1}\left\{\frac{k}{\eta}\right\}}
+O\left(k^{-1}\right).
\end{multline*}
Thus \eqref{main identity2} holds.
%
%
%
%
%
\subsection*{Acknowledgements}
The authors would like to express their gratitude to the anonymous referee for carefully reading 
the manuscript and pointing out some inaccuracies and oversights in an earlier version.
They would like to thank
Masanobu Kaneko, Masaki Kato, Nobushige Kurokawa
and Yoshinori Mizuno for encouragement.
The first author would like to dedicate this article to the memory of Steve Zucker (1949--2019).
%
%
%
\subsection*{Data Availability}
Data sharing is not applicable to this article as no datasets were generated or analyzed for this work. 
\section*{Declaration}
The authors have no conflicts of interest, financial or non-financial, to declare which are relevant to the content of this article.
%
%
%
%
%
%

%
%
%
%
%
%
%
%
%
%
%
\end{document}